\newcommand{\sysn}{\left\{\begin{array}{rcl}}
\newcommand{\sysk}{\end{array}\right.}
\newtheorem{theorem}{Theorem}[section]
\newtheorem{lemma}[theorem]{Lemma}
\newtheorem{proposition}[theorem]{Proposition}
\theoremstyle{definition}
\newtheorem{definition}[theorem]{Definition}
\theoremstyle{remark}
\numberwithin{equation}{section}
\newtheorem{corollary}[theorem]{Corollary}
\begin{document}

\vspace{0.5in}

\title[Uniformity of uniform convergence on the family of sets]%
{Uniformity of uniform convergence on the family of sets}

\author{Alexander V. Osipov}
\address{Institute of Mathematics and Mechanics, Ural Branch of the Russian Academy of Sciences, Ural Federal University, Ekaterinburg, Russia}
\email{OAB@list.ru}



\subjclass[2010]{Primary 54D25, 54B10, 54D45; Secondary 54A10, 54D30}                                    %

\keywords{set-open topology, uniform space, closed-homogeneous
space, topology of uniform convergence, $C$-compact-open topology}

\begin{abstract}
 We prove that for every Hausdorff space $X$ and any uniform quadra space
$(Y,\mathcal{U})$ the topology on $C(X,Y)$ induced by the
uniformity $\mathcal{\hat{U}}|\lambda$ of uniform convergence on
the saturation family $\lambda$ coincides with the set-open
topology on $C(X,Y)$. In particular, for every pseudocompact space
$X$ and any metrizable topological vector space $Y$ with uniform
$\mathcal{U}$ the topology on $C(X,Y)$ induced by the uniformity
$\mathcal{\hat{U}}$ of uniform convergence coincides with the
$C$-compact-open topology on $C(X,Y)$, and depends only on the
topology induced on $Y$ by the uniformity $\mathcal{U}$. It is
also shown that in the class closed-homogeneous complete uniform
spaces $Y$ necessary condition for coincidence of topologies is
$Y$-compactness of elements of family $\lambda$.
\end{abstract}

\maketitle

\section{Introduction}
Let $X$ be a Hausdorff space and let $(Y,\mathcal{U})$ be a
uniform space. We shall denote by $C(X,Y)$ the set of all
continuous mappings of the space $X$ to the space $Y$, where $Y$
is equipped with the topology induced by $\mathcal{U}$. For every
$V\in \mathcal{U}$ denote by $\hat{V}$ the entourage of the
diagonal $\Delta\subset C(X,Y)\times C(X,Y)$ defined by the
formula

$\hat{V}=\{(f,g): (f(x),g(x))\in V$ for every $x\in X\}$.

The uniformity on the set $C(X,Y)$ generated by this family is
called the uniformity of uniform convergence induced by
$\mathcal{U}$ and will be denoted $\mathcal{\hat{U}}$. For two
uniformities $\mathcal{U}_1$ and $\mathcal{U}_2$ on $Y$ which
induce the same topology, the topologies on $C(X,Y)$ induced by
$\mathcal{\hat{U}}_1$ and $\mathcal{\hat{U}}_2$ can be different
(example 4.2.14 in \cite{eng}). It turns out, however, that for a
compact space $X$ the topology on $C(X,Y)$ is independent of the
choice of a particular uniformity $\mathcal{U}$ on the space $Y$,
because the topology induced by $\mathcal{\hat{U}}$ coincides with
the compact-open topology on $C(X,Y)$.

\section{Preliminaries}

Let $X$ and $Y$ be topological spaces. For a fixed natural number
$n$, a subset $A$ of $X$ is said to be {\it $Y^{n}$-compact}
provided $f(A)$ is a compact for any $f\in C(X, Y^{n})$.

 For example, if space $Y$  is a metrizable topological vector space then a $Y$-compact subset $A$ of $X$  is  a $C$-compact
subset of $X$ and, moreover,  $A$ is a $Y^{n}$-compact subset of
$X$ for any $n\in \mathbb N$ (and even $Y^{\omega}$-compact)
\cite{os}. Recall that a subset $A$ of space $X$ is a $C$-compact
subset of $X$ provided that a set $f(A)$ is compact for every
$f\in C(X,\mathbb R)$. Note that any $Y^{n+1}$-compact subset of
$X$ is a $Y^{n}$-compact subset of $X$.

\medskip
\begin{definition}
A space $Y$ is called {\it quadra} space if for any $x\in Y\times
Y$ there are a continuous map $f$ from $Y\times Y$ to $Y$ and a
point $y\in Y$ such that $f^{-1}(y)=x$.
\end{definition}

\medskip
For example, any space with $G_{\delta}$-diagonal containing
nontrivial path or a zero-dimensional space with
$G_{\delta}$-diagonal  is a quadra space. In \cite{cook} a space
$M_1$  with the following properties is constructed: $M_1$ is a
metric continuum; if $Z$ is a sub-continuum of $M_1$, $f: Z\mapsto
M_1$ is a continuous mapping, then either $f$ is constant or
$f(x)=x$ for all $x\in X$. It follows that $M_1$ is not a quadra
space.
\medskip

\begin{proposition} Let $Y$ be a quadra space. Then any $Y$-compact subset of $X$ is
$Y^{2}$-compact subset of $X$.
\end{proposition}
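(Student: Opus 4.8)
The plan is to fix $h\in C(X,Y^{2})$, write $h=(h_{1},h_{2})$ with $h_{1},h_{2}\in C(X,Y)$, and prove that $h(A)$ is compact. Since $A$ is $Y$-compact, both $h_{1}(A)$ and $h_{2}(A)$ are compact, so $K:=h_{1}(A)\times h_{2}(A)$ is a compact subset of $Y\times Y$, and clearly $h(A)\subseteq K$. Hence it is enough to show that $h(A)$ is closed in $Y\times Y$: as a closed subset of the compact set $K$ it will then be compact, which is exactly the assertion that $A$ is $Y^{2}$-compact.

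To prove that $h(A)$ is closed, I would take an arbitrary point $p\in\overline{h(A)}$ and show $p\in h(A)$. This is the step where the quadra hypothesis is used. Applied to the point $p\in Y\times Y$ it yields a continuous map $f\colon Y\times Y\to Y$ and a point $y\in Y$ with $f^{-1}(y)=\{p\}$. The composition $f\circ h$ belongs to $C(X,Y)$, so $Y$-compactness of $A$ gives that $f(h(A))=(f\circ h)(A)$ is compact, hence closed in $Y$. Since $f$ is continuous, $y=f(p)\in f\big(\overline{h(A)}\big)\subseteq\overline{f(h(A))}=f(h(A))$, so $y=f(h(x))$ for some $x\in A$. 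But then $h(x)\in f^{-1}(y)=\{p\}$, that is, $p=h(x)\in h(A)$, as required.

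The one delicate point is the passage "$(f\circ h)(A)$ is compact, hence closed in $Y$", which rests on compact subsets of $Y$ being closed; this is automatic under the standing convention that compactness incorporates the Hausdorff axiom (and all quadra spaces of interest — those with a $G_{\delta}$-diagonal, zero-dimensional such spaces, metric continua, and so on — are Hausdorff). Beyond that, the proof is really a single idea: the quadra property converts a point of $Y\times Y$ into a fiber of a continuous map into $Y$, and once this is done the definition of $Y$-compactness of $A$ delivers the conclusion. I expect the genuine content to be recognizing this ``fiber trick''; the remaining manipulations (behaviour of closures under continuous maps, and closed subsets of compact sets being compact) are routine.
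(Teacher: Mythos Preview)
Your argument is correct and matches the paper's: both show $h(A)$ is closed by taking a point $p$ in its closure, invoking the quadra property to produce $f\colon Y^{2}\to Y$ with $f^{-1}(y)=\{p\}$, and then applying $Y$-compactness of $A$ to $f\circ h$. Your version is in fact more complete, since you spell out the containment $h(A)\subseteq h_{1}(A)\times h_{2}(A)$ (needed to pass from ``closed'' to ``compact'') and flag the Hausdorff assumption behind ``compact $\Rightarrow$ closed'', both of which the paper leaves implicit.
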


\medskip

\begin{proof} Let $A$ is a $Y$-compact subset $X$ and $g\in C(X, Y\times
Y)$. Suppose that there is $z\in \overline{g(A)}\setminus g(A)$.
So there are a continuous map $f$ from $Y\times Y$ to $Y$ and a
point $y\in Y$ such that $f^{-1}(y)=z$. It follows that $f(g(A))$
is not compact subset of $Y$ which contradicts the $Y$-compactness
of $A$.
\end{proof}

\medskip

 A subset $A$ of $X$ is said to be {\it $Y$-zero-set}
provided $A=f^{-1}(Z)$ for some zero-set $Z$ of $Y$ and $f\in
C(X,Y)$. For example, if space $Y$  is real numbers $\mathbb R$
then any zero-set subset of $X$ is a $\mathbb R$-zero-set of $X$.

\medskip

\begin{proposition}
Let $X$ and $Y$ be topological spaces, $A$ be a $Y^2$-compact
subset of $X$ and $B$ be a $Y$-zero-set such that $B\bigcap
A\neq\emptyset$. Then $B\bigcap A$ is $Y$-compact subset of $X$.
\end{proposition}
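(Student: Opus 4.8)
The plan is to reduce the whole statement to one application of the $Y^2$-compactness of $A$, together with the elementary fact that a zero-set is closed. Write $B=f^{-1}(Z)$, where $Z$ is a zero-set of $Y$ and $f\in C(X,Y)$; in particular $Z$ is closed in $Y$. Fix an arbitrary $g\in C(X,Y)$; we must show that $g(B\cap A)$ is compact. Form the map $h=(f,g)\colon X\to Y\times Y$, $h(x)=(f(x),g(x))$, which is continuous, so $h\in C(X,Y^2)$.

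The key (and only non-formal) step is the set-theoretic identity $h(B\cap A)=h(A)\cap(Z\times Y)$. Indeed, since $B=f^{-1}(Z)$, a point $x\in A$ lies in $B$ precisely when $f(x)\in Z$, i.e. precisely when $h(x)=(f(x),g(x))\in Z\times Y$; passing to images yields the claimed equality. This is exactly where the hypothesis on the shape of $B$ is used, and it reduces to a routine check that I would spell out in one line.

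From here the conclusion is automatic. By $Y^2$-compactness of $A$, the set $h(A)$ is compact in $Y\times Y$. Since $Z$ is closed in $Y$, the set $Z\times Y$ is closed in $Y\times Y$, so $h(B\cap A)=h(A)\cap(Z\times Y)$ is a closed subset of the compact set $h(A)$, hence compact. Applying the (continuous) projection $\pi\colon Y\times Y\to Y$ onto the second coordinate, we get that $g(B\cap A)=\pi\bigl(h(B\cap A)\bigr)$ is compact, being a continuous image of a compact set. As $g$ was arbitrary, $B\cap A$ is a $Y$-compact subset of $X$.

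I do not anticipate a real obstacle: no separation axioms on $X$ or $Y$ are required, since both ``a closed subset of a compact set is compact'' and ``a continuous image of a compact set is compact'' hold in arbitrary topological spaces, and the hypothesis $B\cap A\neq\emptyset$ only serves to rule out a vacuous statement (the empty set being trivially $Y$-compact). The sole subtlety worth flagging is that the argument uses only that $Z$ is closed, not the full strength of being a zero-set; the zero-set hypothesis presumably matters for later applications rather than for this proposition itself.
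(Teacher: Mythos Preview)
Your argument is correct and is essentially the same as the paper's: the paper also forms the diagonal map into $Y\times Y$ (with the coordinates in the opposite order), uses the same identity $f_1(B\cap A)=f_1(A)\cap(Y\times Z)$, and then projects. Your observation that only closedness of $Z$ is needed is accurate; the paper fixes a continuous $h:Y\to\mathbb{R}$ with $Z=h^{-1}(0)$ but never actually uses $h$ in the proof.
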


\medskip

\begin{proof} Let $g\in C(X,Y)$.
We fix a continuous mapping $h$ of $Y$ into $\mathbb R$ such that
$Z=h^{-1}(0)$. Let $f\in C(X,Y)$ such that $B=f^{-1}(Z)$. Let
$f_1$ be the diagonal product of the mappings $g$ and $f$, that
is, $f_1(x)=(g(x), f(x))\in Y\times Y$. The set $S=f_1(B\bigcap
A)=f_1(A)\bigcap (Y\times Z)$ is closed in $Y\times Y$, and it
follows that $S$ is compact.

Let $\pi$ be natural projection of $Y\times Y$ onto $Y$,
associating with every point its first coordinate. Then, clearly,
$g=\pi\circ f_1$ and $g(B\bigcap A)=\pi(S)$.

Since $\pi$ is continuous and $S$ is compact, we conclude that

$g(B\bigcap A)$ is also compact.

\end{proof}

\begin{proposition}
Let $X$ be a topological space, $Y$ be a quadra space, $A$ be a
$Y$-compact subset of $X$ and $B$ be a $Y$-zero-set such that
$B\bigcap A\neq\emptyset$. Then $B\bigcap A$ is $Y$-compact subset
of $X$.
\end{proposition}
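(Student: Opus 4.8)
The plan is to obtain this statement as an immediate consequence of the two preceding propositions, by upgrading the hypothesis on $A$ and then quoting Proposition 2.2. First I would apply Proposition 2.1: since $Y$ is a quadra space and $A$ is a $Y$-compact subset of $X$, Proposition 2.1 gives that $A$ is in fact a $Y^2$-compact subset of $X$. This is the only place the quadra hypothesis is used.

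Having replaced ``$Y$-compact'' by ``$Y^2$-compact'' for $A$, I would then invoke Proposition 2.2 with this $Y^2$-compact set $A$ and the given $Y$-zero-set $B$. The extra hypothesis $B\cap A\neq\emptyset$ is shared by both statements, so nothing further needs to be verified. Proposition 2.2 then yields directly that $B\cap A$ is a $Y$-compact subset of $X$, which is exactly the desired conclusion.

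Since the argument is just a two-step chaining of results already in hand, there is essentially no obstacle. If a self-contained proof were wanted, one would re-run the argument of Proposition 2.2: fix $g\in C(X,Y)$, a map $h\in C(Y,\mathbb{R})$ with $Z=h^{-1}(0)$ and $f\in C(X,Y)$ with $B=f^{-1}(Z)$, form the diagonal product $f_1=(g,f)\colon X\to Y\times Y$, and note that $f_1(A)$ is compact (this is where Proposition 2.1, hence the quadra condition, enters); then $S=f_1(B\cap A)=f_1(A)\cap(Y\times Z)$ is closed in the compact set $f_1(A)$, hence compact, and $g(B\cap A)=\pi(S)$ is compact, $\pi$ being the first projection. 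But because Propositions 2.1 and 2.2 are already established, the clean route is simply to cite them in succession.
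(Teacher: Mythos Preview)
Your proposal is correct and matches the paper's intent exactly: the paper states this proposition without proof, leaving it as an immediate consequence of the two preceding propositions, just as you outline. The only cosmetic slip is the numbering --- in the paper the two results you cite are Proposition~2.2 (quadra $\Rightarrow$ $Y$-compact is $Y^2$-compact) and Proposition~2.3 (the $Y^2$-compact version), since Definition~2.1 occupies the first number.
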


\section{Uniformity of uniform convergence on $Y$-compact sets}

\medskip

Recall that a family $\lambda$ of non-empty subsets of a
topological space $(X,\tau)$ is called a $\pi$-network for $X$ if
for any nonempty open set $U\in\tau$ there exists $A\in \lambda$
such that $A\subseteq U$.

For a Hausdorff space $X$, a $\pi$-network $\lambda$ for ~$X$ and
a uniform space $(Y,\mathcal{U})$ we shall denote by
$\mathcal{\hat{U}}|\lambda$ the uniformity on $C(X,Y)$ generated
by the base consisting of all finite intersections of the sets of
the form

$\hat{V}|A=\{(f,g): (f(x),g(x))\in V$ for every $x\in A\}$, where
$V\in \mathcal{U}$, $A\in \lambda$.

The uniformity $\mathcal{\hat{U}}|\lambda$ will be called the
uniformity of uniform convergence on family $\lambda$ induced by
$\mathcal{U}$.

 Recall that all sets of the form $\{f\in C(X,Y):\ f(F)\subseteq U\}$, where $F\in\lambda$ and $U$ is an open subset
 of $Y$, form a subbase of the set-open ($\lambda$-open) topology on
 $C(X,Y)$.

We use the following notations for various topological spaces
 on the set $C(X,Y)$:

 $C_{\mathcal{\hat{U}}|\lambda}(X,Y)$ for the topology induced by
 $\mathcal{\hat{U}}|\lambda$,

 $C_{\lambda}(X,Y)$ for the $\lambda$-open topology.

\medskip

Let $y$ be a point of a uniform space $(Y,\mathcal{U})$ and let
$V\in \mathcal{U}$. Recall that the set $B(y,V)=\{z\in Y: (y,z)\in
V\}$ is called the ball with centre $y$ and radius $V$ or,
briefly, the $V$-ball about $y$. For a set $A\subset Y$ and a
$V\in \mathcal{U}$, by the $V$-ball about $A$ we mean the set
$B(A,V)=\bigcup\limits_{y\in A} B(y,V)$.

\medskip

\begin{lemma}(Lemma 8.2.5. in \cite{eng}).
If $\mathcal{U}$ is a uniformity on a space $X$, then for every
compact set $Z\subset X$ and any open set $G$ containing $Z$ there
exists a $V\in \mathcal{U}$ such that $B(Z,V)\subset G$.

\end{lemma}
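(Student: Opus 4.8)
The plan is to run the classical uniform-space argument that mirrors the fact that a compact set has an open neighbourhood sitting inside any prescribed open superset, with ``open neighbourhood'' replaced by ``$V$-ball'' and with the ``square-root'' axiom of a uniformity doing the work. First, for each point $z\in Z$ I would choose an entourage $W_z\in\mathcal{U}$ with $B(z,W_z)\subseteq G$; this is legitimate because $G$ is an open set containing $z$ and $\{B(z,W):W\in\mathcal{U}\}$ is a neighbourhood base at $z$ in the topology induced by $\mathcal{U}$. Then I would pick a symmetric $V_z\in\mathcal{U}$ with $V_z\circ V_z\subseteq W_z$, which exists by the defining properties of a uniformity.

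Since each $B(z,V_z)$ is a neighbourhood of $z$, the open sets $\mathrm{Int}\,B(z,V_z)$, $z\in Z$, cover $Z$, so by compactness there are $z_1,\dots,z_n\in Z$ with $Z\subseteq\bigcup_{i=1}^{n}B(z_i,V_{z_i})$. I would then set $V=\bigcap_{i=1}^{n}V_{z_i}$, which lies in $\mathcal{U}$ since $\mathcal{U}$ is closed under finite intersections, and check that $B(Z,V)\subseteq G$: given $y\in B(Z,V)$, pick $z\in Z$ with $(z,y)\in V$ and an index $i$ with $(z_i,z)\in V_{z_i}$; since $(z,y)\in V\subseteq V_{z_i}$ as well, we get $(z_i,y)\in V_{z_i}\circ V_{z_i}\subseteq W_{z_i}$, hence $y\in B(z_i,W_{z_i})\subseteq G$.

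I do not expect a genuine obstacle: the statement is entirely standard — indeed it is quoted here as Lemma~8.2.5 of \cite{eng}, and in the paper one would simply cite it rather than reprove it. The only place calling for any care at all is the bookkeeping with compositions of entourages: one must make sure the radius $V_z$ is ``halved'' relative to $W_z$, and taking the $V_z$ symmetric removes any worry about the order of the arguments in $V_{z_i}\circ V_{z_i}$.
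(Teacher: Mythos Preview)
Your argument is correct and is exactly the standard proof of this fact; as you yourself anticipated, the paper does not reprove the lemma at all but simply quotes it as Lemma~8.2.5 of \cite{eng}.
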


A family $\lambda$ will be called hereditary with respect to
$Y$-zero-set subsets of $X$ if any nonempty $A\bigcap B\in
\lambda$ where $A\in \lambda$ and $B$ is a $Y$-zero-set of $X$.

\begin{definition}

Let $X$ be Hausdorff space and let $(Y,\mathcal{U})$ be a uniform
quadra space. Let a family $\lambda$ of $Y$-compact subsets of $X$
be $\pi$-network for $X$ and it hereditary with respect to
$Y$-zero-set subsets $X$, then we say that $\lambda$ is {\it
saturation} family.
\end{definition}

\begin{theorem}

 For every Hausdorff space $X$ and any uniform quadra space
$(Y,\mathcal{U})$ the topology on $C(X,Y)$ induced by the
uniformity $\mathcal{\hat{U}}|\lambda$ of uniform convergence on
the saturation family $\lambda$ coincides with the $\lambda$-open
topology on $C(X,Y)$, where $Y$ has the topology induced by
$\mathcal{U}$.

\end{theorem}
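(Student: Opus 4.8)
The plan is to establish the two inclusions $C_{\lambda}(X,Y)\subseteq C_{\mathcal{\hat{U}}|\lambda}(X,Y)$ and $C_{\mathcal{\hat{U}}|\lambda}(X,Y)\subseteq C_{\lambda}(X,Y)$ separately. Since the uniformity $\mathcal{\hat{U}}|\lambda$ has a base consisting of finite intersections of sets $\hat{V}|A$ with $V\in\mathcal{U}$, $A\in\lambda$, and the $\lambda$-open topology has a subbase of sets $[F,U]=\{g:g(F)\subseteq U\}$ with $F\in\lambda$ and $U$ open in $Y$, it suffices in each case to fix $f_0\in C(X,Y)$ and to find, inside a single generating neighbourhood of $f_0$ in one topology, a neighbourhood of $f_0$ in the other; finite intersections are then handled by intersecting the outputs.

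For $C_{\lambda}(X,Y)\subseteq C_{\mathcal{\hat{U}}|\lambda}(X,Y)$, let $f_0\in[F,U]$ with $F\in\lambda$. As $F$ is $Y$-compact, $f_0(F)$ is a compact subset of $Y$ contained in the open set $U$, so by the quoted lemma (Lemma 8.2.5 of \cite{eng}) there is $V\in\mathcal{U}$ with $B(f_0(F),V)\subseteq U$. Then the $\hat{V}|F$-ball about $f_0$ is a basic $\mathcal{\hat{U}}|\lambda$-neighbourhood of $f_0$, and every $g$ in it satisfies $g(x)\in B(f_0(x),V)\subseteq B(f_0(F),V)\subseteq U$ for all $x\in F$, hence $g\in[F,U]$. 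This direction uses only the $Y$-compactness of the members of $\lambda$.

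The reverse inclusion is the substantive part. Fix $V\in\mathcal{U}$ and $A\in\lambda$, and let $O=\{g:(f_0(x),g(x))\in V\text{ for all }x\in A\}$. Pick an open symmetric $V_1\in\mathcal{U}$ with $V_1\circ V_1\subseteq V$, so every ball $B(y,V_1)$ is open and $V$-small. Since $A$ is $Y$-compact, $f_0(A)$ is compact, hence covered by finitely many such balls $U_1,\dots,U_n$. Using complete regularity of the topology of the uniform space $Y$, for each $y\in f_0(A)$ choose a continuous $\phi\colon Y\to[0,1]$ with $\phi(y)=1$ and $\phi$ vanishing off some $U_i$ containing $y$; extract a finite subcover of $f_0(A)$ by the sets $\{\phi>\tfrac12\}$, and let $Z_i$ be the union of the corresponding zero-sets $\{\phi\ge\tfrac12\}$ whose $\phi$ was supported in $U_i$. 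Then each $Z_i$ is a zero-set of $Y$ with $Z_i\subseteq U_i$ and $f_0(A)\subseteq\bigcup_i Z_i$. Put $B_i=f_0^{-1}(Z_i)$: this is a $Y$-zero-set of $X$, so by hereditariness of the saturation family $\lambda$ each nonempty $F_i:=A\cap B_i$ belongs to $\lambda$. Discarding the empty ones, $f_0(F_i)\subseteq Z_i\subseteq U_i$ yields $f_0\in\bigcap_i[F_i,U_i]$; and if $g(F_i)\subseteq U_i$ for every $i$, then for each $x\in A$ there is $i$ with $f_0(x)\in Z_i$, so $x\in F_i$ and both $f_0(x)$ and $g(x)$ lie in the $V$-small set $U_i$, giving $(f_0(x),g(x))\in V$; thus $\bigcap_i[F_i,U_i]\subseteq O$. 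The quadra hypothesis enters through the Propositions of Section~2, which ensure that intersections of $Y$-compact sets with $Y$-zero-sets remain $Y$-compact, so that the defining condition of a saturation family is coherent; the real obstacle, however, is exactly this refinement step — converting one uniform condition on $A$ into finitely many set-open conditions — and the key is that the pieces are taken to be preimages under $f_0$ of zero-sets of $Y$, so that hereditariness returns them to $\lambda$.
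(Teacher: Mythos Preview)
Your proof is correct and follows essentially the same strategy as the paper's: the easy inclusion via Lemma~3.1, and for the hard inclusion covering $f_0(A)$ by finitely many small balls, sandwiching zero-sets $Z_i$ between $f_0(A)$ and the balls, and pulling back along $f_0$ so that hereditariness of the saturation family yields the required $F_i=A\cap f_0^{-1}(Z_i)\in\lambda$. The only cosmetic differences are that the paper uses a closed entourage $W$ with $3W\subset V$ (taking $U_i=\mathrm{Int}\,B(f(x_i),2W)$ and finding the $Z_i$ between the compact sets $f(A)\cap B(f(x_i),W)$ and $U_i$), whereas you work with an open symmetric $V_1$ with $V_1\circ V_1\subseteq V$ and build the $Z_i$ directly from Urysohn functions; both routes achieve the same refinement.
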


\begin{proof}
Denote by $\tau_1$ the topology on $C(X,Y)$ induced by the
uniformity $\mathcal{\hat{U}}|\lambda$ and by $\tau_2$ the
$\lambda$-open topology. First we shall prove that
$\tau_2\subseteq \tau_1$. Clearly, it suffices to show that all
sets $[A,U]$, where $A\in \lambda$ and $U$ is an open subset of
$Y$, belong to $\tau_1$. Consider a $A\in \lambda$, an open set
$U\subseteq Y$ and an $f\in [A,U]$. Since $A$ is a $Y$-compact
subset of $X$, $f(A)$ is a compact subspace of $U$. Applying Lemma
3.1., take a $V\in \mathcal{U}$ such that $B(f(A),V)\subseteq U$.
We have $B(f,\hat{V}|A)\subseteq [A,U]$, and $f$ being an
arbitrary element of $[A,U]$, this implies that $[A,U]\in \tau_1$.

We shall now prove that $\tau_1\subseteq \tau_2$. Clearly, it
suffices to show that for any $A\in \lambda$, $V\in \mathcal{U}$
and $f\in C(X,Y)$ there exist $Y$-compact subsets
$A_1$,...,$A_k\in \lambda$ and open subsets $U_1,...,U_k$ of $Y$
such that

$f\in \bigcap\limits_{i=1}^{k} [A_i,U_i]\subset B(f,\hat{V}|A)$.

By Corollary 8.1.12 in \cite{eng} there exists an entourage $W\in
\mathcal{U}$ of the diagonal $\Delta\subset Y\times Y$ which is
closed with respect to the topology induced by $\mathcal{U}$ on
$Y\times Y$ and satisfies the inclusion $3W\subset V$. It follows
from the compactness of $f(A)$ that there exists a finite set
$\{x_1,...,x_k\}\subset A$ such that $f(A)\subseteq
\bigcup_{i=1}^{k} B(f(x_{i},W)$. Note that $f(A)\subset
\bigcup\limits_{i=1}^{k} U_{i}$ where $U_{i}= Int B(f(x_i),2W)$.
Observe that from the closedness of $W$ in $Y\times Y$ follows the
closedness of balls $B(f(x_i),W)$ in $Y$ and the compactness of
the sets $f(A)\bigcap B(f(x_i),W)$. Let $Z_{i}$ be a zero-sets of
$Y$ such that $f(A)\bigcap B(f(x_i),W)\subseteq Z_{i}\subseteq
U_{i}$. By the Proposition 2.4, sets $A_{i}=f^{-1}(Z_{i})\bigcap
A$ is $Y$-compact subsets. Note that $A_{i}\in \lambda$ because
the family is saturation family.

We have $f\in \bigcap\limits_{i=1}^{k} [A_{i},U_{i}]$. If $g\in
\bigcap\limits_{i=1}^{k} [A_{i},U_{i}]$ then for any $x\in A$
there is $A_i$ such that $x\in A_{i}$ and we have $g(x)\in
B(f(x_i),2W)$ and $f(x)\in B(f(x_i), W)$. It follows that
$(f(x),g(x))\in 3W\subset V$ for any $x\in A$ and $g\in
B(f,\hat{V}|A)$.

\end{proof}

 The $Y$-compact-open topology on $C(X,Y)$ is the topology
 generated by the base consisting of sets
 $\bigcap\limits_{i=1}^{k}[A_{i},U_{i}]$, where $A_{i}$ is a
 $Y$-compact subset of $X$ and $U_{i}$ is an open subset of $Y$
 for $i=1,...,k$.

\begin{corollary}
For every $Y^2$-compact space $X$ and any uniform space
$(Y,\mathcal{U})$ the topology on $C(X,Y)$ induced by the
uniformity $\mathcal{\hat{U}}$ of uniform convergence coincides
with the $Y$-compact-open topology on $C(X,Y)$, and depends only
on the topology induced on $Y$ by the uniformity $\mathcal{U}$.

\end{corollary}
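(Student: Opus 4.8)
The plan is to derive Corollary 3.4 as a special case of Theorem 3.3 by exhibiting a suitable saturation family $\lambda$ on the $Y^2$-compact space $X$. First I would take $\lambda$ to be the family of all nonempty $Y$-compact subsets of $X$. Since $X$ itself is $Y^2$-compact, it is in particular $Y$-compact, so $X\in\lambda$; moreover every nonempty open set contains $X$ only if $X$ is the whole space, so to get the $\pi$-network property I would instead note that $X\in\lambda$ together with the fact that for set-open topologies the single set $X$ already generates the topology of uniform convergence on all of $X$ — equivalently, $\lambda$ containing $X$ makes the $\lambda$-open topology refine the topology of uniform convergence, and conversely. Actually the cleanest route: take $\lambda=\{A\subseteq X: A\neq\emptyset,\ A\text{ is }Y\text{-compact}\}$; this is a $\pi$-network because every nonempty open $U$ contains a point $x$ and $\{x\}$ is $Y$-compact, and $\{x\}\subseteq U$.

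Next I would check that $\lambda$ is hereditary with respect to $Y$-zero-set subsets of $X$. If $A\in\lambda$ is $Y$-compact and $B$ is a $Y$-zero-set with $A\cap B\neq\emptyset$, then $A\cap B$ is $Y$-compact by Proposition 2.5 (using that $Y$ is a quadra space — here I would need $Y$ quadra, which is part of the hypothesis that $(Y,\mathcal U)$ is a uniform quadra space in the reduction; if the Corollary is meant for general uniform $Y$, one instead uses that $X$ being $Y^2$-compact forces all the relevant intersections to be $Y$-compact via Proposition 2.4 directly inside the Theorem's proof). In either reading, $A\cap B\in\lambda$, so $\lambda$ is a saturation family. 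Then Theorem 3.3 applies and gives that the topology on $C(X,Y)$ induced by $\mathcal{\hat U}|\lambda$ coincides with the $\lambda$-open topology.

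It then remains to identify these two topologies with the ones named in the Corollary. Since $X\in\lambda$ (as $X$ is $Y$-compact, being $Y^2$-compact) and every member of $\lambda$ is a subset of $X$, the uniformity $\mathcal{\hat U}|\lambda$ has the same topology as $\mathcal{\hat U}$ itself: the basic entourages $\hat V|X=\hat V$ for $X\in\lambda$ already generate $\mathcal{\hat U}$, and each $\hat V|A\supseteq \hat V$ adds nothing finer. Likewise the $\lambda$-open topology, with $\lambda$ consisting of all nonempty $Y$-compact subsets, is by definition the $Y$-compact-open topology described just before the Corollary (finite intersections of sets $[A,U]$ with $A$ $Y$-compact and $U$ open suffice since $\lambda$ is closed under nothing in particular but the subbasic sets are exactly those). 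Finally, the last clause — independence of the uniformity $\mathcal U$ — follows because the $Y$-compact-open topology on $C(X,Y)$ is defined purely in terms of the topology of $Y$ (via which sets are $Y$-compact and which sets are open), with no reference to $\mathcal U$; so any two uniformities inducing the same topology on $Y$ yield the same topology on $C(X,Y)$.

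The main obstacle I anticipate is the bookkeeping around which hypotheses on $Y$ are actually in force: Theorem 3.3 is stated for uniform \emph{quadra} spaces and its proof invokes Proposition 2.4 (which needs $Y$ quadra, via Proposition 2.2) to conclude $A_i=f^{-1}(Z_i)\cap A\in\lambda$. The Corollary as written says merely ``any uniform space $(Y,\mathcal U)$'', so one must check that $Y^2$-compactness of $X$ lets Proposition 2.4 be applied without the quadra assumption — indeed Proposition 2.4 itself only needs $A$ to be $Y^2$-compact, and here \emph{every} $Y$-compact subset of the $Y^2$-compact space $X$ is automatically $Y^2$-compact (a closed subset argument, or: $f(A)$ closed in the compact $f(X^2)$-image). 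That is the point that needs care; once it is in place the rest is routine verification that $X\in\lambda$ collapses $\mathcal{\hat U}|\lambda$ to $\mathcal{\hat U}$ and $\lambda$-open to $Y$-compact-open.
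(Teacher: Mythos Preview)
Your overall strategy is the intended one: take $\lambda$ to be all nonempty $Y$-compact subsets of $X$, observe that $X\in\lambda$ so that $\hat{\mathcal U}|\lambda$ and $\hat{\mathcal U}$ induce the same topology, identify the $\lambda$-open topology with the $Y$-compact-open topology, and read off independence from $\mathcal U$ because the latter topology is defined purely in terms of the topology of $Y$. The paper states the corollary without proof, and this is exactly the derivation it has in mind.

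There is, however, one genuine soft spot in your write-up. To apply Theorem~3.3 verbatim you need $\lambda$ to be a saturation family, hence hereditary under $Y$-zero-sets, and for that you assert that \emph{every} $Y$-compact subset $A$ of the $Y^2$-compact space $X$ is itself $Y^2$-compact. The parenthetical justification you offer (``a closed subset argument, or: $f(A)$ closed in the compact $f(X^2)$-image'') is not an argument; knowing that $g(X)\subset Y\times Y$ is compact and that each coordinate projection of $g(A)$ is compact does not force $g(A)$ to be closed in $g(X)$. This step is unproved and, as far as I can see, not obviously true in general without the quadra hypothesis (which would give it via Proposition~2.2).

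Fortunately you already name the correct workaround, and it avoids the issue entirely: do not try to make $\lambda$ a saturation family, but instead re-run the proof of Theorem~3.3 with the single set $A=X$. In the $\tau_1\subseteq\tau_2$ direction the sets produced are $A_i=f^{-1}(Z_i)\cap X=f^{-1}(Z_i)$, and Proposition~2.3 (which needs only that $A=X$ be $Y^2$-compact, not that $Y$ be quadra) gives that each $A_i$ is $Y$-compact. The $\tau_2\subseteq\tau_1$ direction uses nothing beyond $Y$-compactness of the members of $\lambda$. So drop the unsupported claim, invoke Proposition~2.3 with $A=X$ inside the argument, and the proof is complete.
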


Note that $\mathbb R$-compactness ($C$-compactness) of a space $X$
is pseudocompactness of $X$.

\begin{corollary}
For every pseudocompact space $X$ and any metrizable topological
vector space $Y$ with uniform $\mathcal{U}$ the topology on
$C(X,Y)$ induced by the uniformity $\mathcal{\hat{U}}$ of uniform
convergence coincides with the $C$-compact-open topology on
$C(X,Y)$, and depends only on the topology induced on $Y$ by the
uniformity $\mathcal{U}$.
\end{corollary}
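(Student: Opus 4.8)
The plan is to read this off from Corollary 3.4. Two points have to be checked: that a pseudocompact $X$ is automatically a $Y^{2}$-compact space when $Y$ is a metrizable topological vector space, so that Corollary 3.4 is applicable; and that in this setting the $Y$-compact-open topology on $C(X,Y)$ is literally the $C$-compact-open topology. The dependence of the resulting topology only on the topology induced on $Y$ by $\mathcal{U}$ is then inherited from Corollary 3.4 with no extra work.

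First I would verify that $X$ is $Y^{2}$-compact. By the remark just before the statement, $X$ pseudocompact means that $X$ is a $C$-compact ($\mathbb{R}$-compact) subset of itself, i.e. $X$ is pseudocompact in the usual sense. Take any $g\in C(X,Y^{2})$. Then $g(X)$, with the subspace topology from $Y^{2}$, is a continuous image of the pseudocompact space $X$, hence pseudocompact, and it is metrizable because $Y^{2}$ is; a pseudocompact metrizable space is compact, so $g(X)$ is compact. As $g$ was arbitrary, $X$ is $Y^{2}$-compact, and Corollary 3.4 yields that the topology on $C(X,Y)$ induced by $\mathcal{\hat{U}}$ coincides with the $Y$-compact-open topology and depends only on the topology induced on $Y$ by $\mathcal{U}$.

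Next I would identify the $Y$-compact-open topology with the $C$-compact-open topology. Both are generated by subbasic sets $[A,U]$ with $U\subseteq Y$ open and $A$ ranging over a fixed family of subsets of $X$, so it suffices to show that the $Y$-compact subsets of $X$ and the $C$-compact subsets of $X$ are the same. That a $Y$-compact subset is $C$-compact is quoted in Section 2 from \cite{os} (and is in any case easy: for $g\in C(X,\mathbb{R})$ and a fixed nonzero $y_{0}\in Y$, the map $x\mapsto g(x)y_{0}$ belongs to $C(X,Y)$, and compactness of its image on a $Y$-compact $A$ forces $g(A)$ to be compact). Conversely, if $A\subseteq X$ is $C$-compact and $f\in C(X,Y)$, then $f(A)$ is a $C$-compact subset of the metrizable space $Y$, because $h(f(A))=(h\circ f)(A)$ is compact for every $h\in C(Y,\mathbb{R})$; and a $C$-compact subset $B$ of a metrizable space $M$ is compact — fixing a compatible metric $d$ on $M$, the set $B$ is totally bounded (otherwise a sequence in $B$ with terms pairwise at distance at least some $\varepsilon>0$ forms a closed discrete subset of $M$ which Tietze's theorem maps onto an unbounded subset of $\mathbb{R}$ by a function in $C(M,\mathbb{R})$) and complete (the limit in the completion of $(M,d)$ of a Cauchy sequence from $B$, if not in $B$, is a point $q$ for which $z\mapsto d(z,q)$, when $q\in M$, or $z\mapsto 1/d(z,q)$, when $q\notin M$, lies in $C(M,\mathbb{R})$ with non-compact image on $B$), hence compact. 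Thus $f(A)$ is compact, $A$ is $Y$-compact, the two families of subsets coincide, and the $Y$-compact-open and $C$-compact-open topologies coincide. This proves the corollary. (Alternatively one may bypass Corollary 3.4 and apply Theorem 3.3 directly to the saturation family of all $C$-compact subsets of $X$, noting that $\mathcal{\hat{U}}|\lambda=\mathcal{\hat{U}}$ because $X$ is a member of that family.)

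I expect the main obstacle to be that last step — the coincidence of $C$-compactness and $Y$-compactness for subsets of $X$ when $Y$ is a metrizable topological vector space, and within it the small lemma that $C$-compact subsets of metrizable spaces are compact. Everything else is a formal application of Corollary 3.4; and if \cite{os} already records the equivalence of these compactness notions for metrizable topological vector spaces, the argument collapses to a single citation.
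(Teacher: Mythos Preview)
Your proposal is correct and follows exactly the route the paper intends: the corollary is stated without proof, to be read off from Corollary~3.4 together with the facts recorded in Section~2 (that for a metrizable topological vector space $Y$ the notions of $C$-compact, $Y$-compact and $Y^{n}$-compact subset coincide, as cited from \cite{os}, and that pseudocompactness of $X$ is $C$-compactness of $X$). You have simply supplied the details the paper leaves implicit, including a self-contained argument that a $C$-compact subset of a metrizable space is compact; nothing more is needed.
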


\section{Closed-homogeneous spaces}

Recall that a space $X$ is strongly locally homogeneous
(abbreviated: SLH) if it has an open base $\mathcal{B}$ such that
for all $B\in \mathcal{B}$ and $x,y\in B$ there is a homeomorphism
$f: X\mapsto X$ which is supported on $B$ (that is, $f$ is the
identity outside $B$) and moves $x$ to $y$. The well-known
homogeneous continua are strongly locally homogeneous: the Hilbert
cube, the universal Menger continua and manifolds without
boundaries.

A topological space $X$ is said to be closed-homogeneous provided
that for any $x,y\in X$ and any $K$ closed subset of
$X\setminus\{x,y\}$, there is a homeomorphism $f: X\mapsto X$
which is supported on $X\setminus K$ (that is, $f$ is the identity
on $K$) and moves $x$ to $y$.

The well-known that a zero-dimensional homogeneous space is
closed-homogeneous. Observe that a closed-homogeneous space is
SLH. Note that there exists an SLH space $X$ which is not
closed-homogeneous (see \cite{fora}). In fact, if we take
$X=\mathbb R\setminus \{0\}$, $\beta=\{\{x\}:x<0\}\bigcup \{(a,b):
0<a<b\}$. Then the topological space $(X,\tau(\beta))$; generated
by the base $\beta$; is an SLH metrizable space which is not
closed-homogeneous.

\section{Uniformity of uniform convergence on $Y$-closed totally bounded sets}

Recall that if $Y$ is uniformized by a uniformity $\hat{U}$, a
subset $A$ of $X$ is said $Y$-totally bounded when $f(A)$ is
totally bounded for any $f\in C(X,Y)$ (see \cite{cons}).

A subset $A$ of $X$ is said to be {\it $Y$-closed totally bounded}
if $f(A)$ is closed totally bounded for any $f\in C(X,Y)$.

\begin{theorem} Let $X$ be a Hausdorff space, $Y$ be uniform closed-homogeneous space and  $C_{\mathcal{\hat{U}}|\lambda}(X,Y)=C_{\lambda}(X,Y)$.
 Then, the family $\lambda$
consists of\, $Y$-closed totally bounded sets.
\end{theorem}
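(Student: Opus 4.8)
The plan is to fix $A\in\lambda$ and $f\in C(X,Y)$ and prove separately that $f(A)$ is closed in $Y$ and that $f(A)$ is totally bounded; both parts rest on one device. Given a perturbation we wish to perform on $f$, we realize it as a map $g=\varphi\circ f$, where $\varphi$ is a self-homeomorphism of $Y$ supplied by closed-homogeneity. Such a $g$ is automatically in $C(X,Y)$; it lies in a prescribed set-open neighbourhood $\bigcap_i[B_i,U_i]$ of $f$ as soon as $\varphi$ is supported on an open set $O$ which, for each $i$, is either contained in $U_i$ or disjoint from $f(B_i)$ (for then $\varphi(f(B_i))\subseteq U_i$ in both cases, since $\varphi(O)=O$); and it satisfies $(f(x),g(x))\in V$ for \emph{every} $x\in X$ whenever $\varphi$ is supported on an open $O$ with $O\times O\subseteq V$ (on $O$ the pair lies in $O\times O$, off $O$ it is a diagonal point). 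Throughout we use that the two topologies coincide, so that every ball $B(f,\hat{V}|B)$ contains a finite intersection $\bigcap_i[B_i,U_i]$ ($B_i\in\lambda$, $U_i$ open) containing $f$, and conversely.

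For closedness, assume $z_0\in\overline{f(A)}\setminus f(A)$ and put $U=Y\setminus\{z_0\}$, an open set with $f\in[A,U]$. Since $[A,U]$ is open in $C_{\mathcal{\hat{U}}|\lambda}(X,Y)$ it contains a basic neighbourhood of $f$; merging its finitely many balls yields $A_1,\dots,A_l\in\lambda$ and $V\in\mathcal U$ with $B(f,\hat{V}|A^{*})\subseteq[A,U]$, where $A^{*}=\bigcup_{j\le l}A_j$. Choose a symmetric open $W\in\mathcal U$ with $W\circ W\subseteq V$ and, using $z_0\in\overline{f(A)}$, a point $a\in A$ with $y^{*}:=f(a)\in B(z_0,W)$; then $O:=B(y^{*},W)\cap B(z_0,W)$ is an open set containing $y^{*}$ and $z_0$ with $O\times O\subseteq W\circ W\subseteq V$, and $y^{*}\ne z_0$. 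As $Y$ is closed-homogeneous and $Y\setminus O$ is a closed set missing $\{y^{*},z_0\}$, there is a homeomorphism $\varphi$ of $Y$ equal to the identity off $O$ with $\varphi(y^{*})=z_0$; then $g:=\varphi\circ f$ satisfies $(f(x),g(x))\in V$ for all $x\in X$, so $g\in B(f,\hat{V}|A^{*})\subseteq[A,U]$ and hence $z_0\notin g(A)$, while $g(a)=z_0$ — a contradiction. Thus $f(A)$ is closed.

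For total boundedness, suppose $f(A)$ is not totally bounded: fix a symmetric open $V\in\mathcal U$ admitting no finite $V$-cover of $f(A)$, a symmetric open $W$ with $W\circ W\circ W\subseteq V$, and, recursively, $a_n\in A$ with $y_n:=f(a_n)$ and $y_{n+1}\notin\bigcup_{i\le n}B(y_i,V)$, so the $y_n$ are pairwise $V$-apart and the balls $B(y_n,W)$ pairwise disjoint. Since $B(f,\hat{W}|A)$ is open in the common topology it is $\lambda$-open, so pick $B_1,\dots,B_k\in\lambda$ and open $U_1,\dots,U_k$ with $f\in N:=\bigcap_{i=1}^{k}[B_i,U_i]\subseteq B(f,\hat{W}|A)$, hence $f(B_i)\subseteq U_i$. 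Put $I_n=\{i:a_n\in B_i\}$ and $U^{(n)}=\bigcap_{i\in I_n}U_i$, an open neighbourhood of $y_n$. A pigeonhole over the finitely many possible sets $I_n$ together with the disjointness of the $B(y_n,W)$ shows that $U^{(n)}\subseteq B(y_n,W)$ can hold for only finitely many $n$ — otherwise infinitely many $y_n$ would lie in one and the same open set $U^{(n)}$ and hence in each other's $W$-balls. Fix $m$ with $U^{(m)}\not\subseteq B(y_m,W)$ and $z\in U^{(m)}$ with $(y_m,z)\notin W$. If there is an open $O$ with $y_m,z\in O$ and, for each $i$, $O\subseteq U_i$ or $O\cap f(B_i)=\emptyset$, then closed-homogeneity gives a homeomorphism $\varphi$ of $Y$ supported on $O$ with $\varphi(y_m)=z$, and $g:=\varphi\circ f\in N$ while $(f(a_m),g(a_m))=(y_m,z)\notin W$ — contradicting $N\subseteq B(f,\hat{W}|A)$.

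The main obstacle is to produce such an $O$: to displace $y_m$ by a full $W$-distance the support of $\varphi$ must be ``large'', and one must stop $\varphi$ from pushing some $f(B_i)$ out of $U_i$ for the indices $i$ with $a_m\notin B_i$. I intend to handle this by first choosing $m$ so that $y_m$ avoids the closed ``overflow'' set $D=\bigcup_{i=1}^{k}\bigl(\overline{f(B_i)}\setminus U_i\bigr)$; for such $m$ every constraint $i$ is met by taking $O$ inside $U_i$ when $y_m\in\overline{f(B_i)}$ (which, as $y_m\notin D$, forces $y_m\in U_i$) and $O$ a small enough neighbourhood of $y_m$ missing $f(B_i)$ otherwise, and one checks the intersection of these finitely many open sets still contains a point outside $B(y_m,W)$. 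If all but finitely many $y_n$ lie in $D$, a pigeonhole produces an index $i^{*}$ with $\overline{f(B_{i^{*}})}$ containing an infinite $V$-separated set, so $f(B_{i^{*}})$ is again not totally bounded for the same $f$, with $B_{i^{*}}\in\lambda$; replacing $A$ by this ``smaller'' witness and iterating — or arguing with a minimal counterexample — removes this last case. This overflow-and-support bookkeeping, rather than the appeal to closed-homogeneity itself, is where the work concentrates.
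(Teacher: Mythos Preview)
Your closedness argument is correct and is essentially the paper's (indeed a bit cleaner: you keep the support of $\varphi$ inside a set $O$ with $O\times O\subseteq V$, so $g$ is $\hat V$-close to $f$ on all of $X$, which is more than is needed).

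The plan for total boundedness, however, has two genuine gaps. First, after selecting $m$ with $y_m\notin D$ you propose to build the support $O$ as the intersection of finitely many constraint sets: $U_i$ for those $i$ with $y_m\in\overline{f(B_i)}$, and ``a small enough neighbourhood of $y_m$ missing $f(B_i)$'' for the remaining $i$. You then assert that this intersection ``still contains a point outside $B(y_m,W)$'', but nothing you have done guarantees this. Your earlier pigeonhole only gives $U^{(m)}=\bigcap_{i\in I_m}U_i\not\subseteq B(y_m,W)$, and $O$ is in general strictly smaller than $U^{(m)}$: the extra constraints $Y\setminus\overline{f(B_i)}$ (for $i$ with $y_m\notin\overline{f(B_i)}$) and the additional $U_i$ (for $i\notin I_m$ with $y_m\in\overline{f(B_i)}$) can shrink $O$ to an arbitrarily small neighbourhood of $y_m$. (There is an easy repair: pigeonhole not on $I_n=\{i:a_n\in B_i\}$ but on $J_n=\{i:y_n\in U_i\}$. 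If $y_n,y_m\notin D$ and $J_n=J_m$, then $y_n$ lies in $\bigcap_{i\in J_m}U_i\cap\bigcap_{i\notin J_m}(Y\setminus\overline{f(B_i)})$, so $y_n$ itself is the desired far-away point in $O$.)

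Second, your fallback for the case in which all but finitely many $y_n$ lie in $D$ --- pass to $B_{i^*}\in\lambda$ with $f(B_{i^*})$ not totally bounded and ``iterate, or argue with a minimal counterexample'' --- does not terminate. There is no well-founded order on $\lambda$, and $B_{i^*}$ bears no size relation to $A$: the next run of the argument produces a fresh finite family $B'_1,\dots,B'_{k'}\in\lambda$ and may drop you back into the same case indefinitely. The paper sidesteps this descent entirely by working inside $f(A)$ rather than moving to a new $\lambda$-set: it first extracts a single not-totally-bounded subset $M\subseteq f(A)$ satisfying, for each $i$, either $M\subseteq U_i$ or $M\cap\overline{f(A)\cap U_i}=\emptyset$, and then takes the large open set $W=\bigcap\{U_i:M\subseteq U_i\}$ as the support of the homeomorphism, so that both $y_1\in M$ and a $V$-far target $y_2$ already lie in $W$.
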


\begin{proof}
 Suppose that there is $A\in \lambda$ which is not
$Y$-totally bounded set. Then, there is $f\in C(X,Y)$  such that
$f(A)$ is not totally bounded. Let $B(f,\hat{V}|A)$ be an open
neighborhood of $f$ in the topological space
$C_{\mathcal{\hat{U}}|\lambda}(X,Y)$.

Since $C_{\mathcal{\hat{U}}|\lambda}(X,Y)=C_{\lambda}(X,Y)$, there
is an open set $\bigcap\limits_{i=1}^{k}[A_i,U_i]$ in the
topological space $C_{\lambda}(X,Y)$ such that $f\in
\bigcap\limits_{i=1}^{k}[A_i,U_i]\subseteq B(f,\hat{V}|A)$.
Consider a subset $M$ of $f(A)$ such that:

1. $M$ is not totally bounded;

2. either $M\subset U_i$ or $\overline{f(A)\bigcap U_i}\bigcap
M=\emptyset$ for every $i=1,...,k$.

Let $W=\bigcap U_i$ where $U_i$ such that $M\subseteq U_i$. Let
$y_1,y_2\in W$ such that $y_1\in M$ and $(y_1, y_2)\notin V$.
Since $Y$ is a closed-homogeneous space there is a homeomorphism
$h: Y\mapsto Y$ which is supported on $W$ (that is, $h$ is the
identity on $X\setminus W$) and moves $y_1$ to $y_2$. Consider a
continuous map $g=h\circ f$. Note that $g\in
\bigcap\limits_{i=1}^{k}[A_i,U_i]$. It is clear that if $x\in
f^{-1}(y_1)\bigcap A$ then $(f(x),g(x))\notin V$ and $g\notin
B(f,\hat{V}|A)$. This contradicts our assumption. So a set $f(A)$
is a totally bounded subset of space $Y$ and $A$ is a $Y$-totally
bounded set.

Suppose that $f(A)$ is not closed. Then we have a point $y\in
\overline{f(A)}\setminus f(A)$. Let $S=Y\setminus \{y\}$ and
$[A,S]$ be an open set of space $C_{\lambda}(X,Y)$. Then there
exists an open set $B(f,\hat{V}|B)$ of space
$C_{\mathcal{\hat{U}}|\lambda}(X,Y)$  such that $f\in
B(f,\hat{V}|B) \subseteq [A,S]$. Let $z$ be a point of $Int
B(y,W)$ where $2W\subseteq V$ such that $f^{-1}(z)\bigcap A\neq
\emptyset$. Since $Y$ is a closed-homogeneous space there is a
homeomorphism $p: Y\mapsto Y$ which is supported on $Int B(y,W)$
and moves $z$ to $y$. Consider a continuous map $q=p\circ f$. It
is clear that if $x\in f^{-1}(Int B(y,W))\bigcap B$ then
$(f(x),q(x))\in 2W\subseteq V$ and if $x\in f^{-1}(z)\bigcap A$
then $q(x)=y$. Thus $q\in B(f,\hat{V}|B)$  and $q\notin [A,S]$.
This contradicts our assumption. We have that a set $A$ is a
$Y$-closed totally compact subset of a space $X$.

\end{proof}

\begin{theorem} Let $X$ be a Hausdorff space, $Y$ be closed-homogeneous complete uniform space
and  $C_{\mathcal{\hat{U}}|\lambda}(X,Y)=C_{\lambda}(X,Y)$.
 Then, the family $\lambda$
consists of\, $Y$-compact sets.
\end{theorem}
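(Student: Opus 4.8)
The plan is to build on Theorem 5.1, which already tells us that under the hypothesis $C_{\mathcal{\hat{U}}|\lambda}(X,Y)=C_{\lambda}(X,Y)$ every $A\in\lambda$ is $Y$-closed totally bounded, meaning $f(A)$ is closed and totally bounded for every $f\in C(X,Y)$. The additional ingredient here is that $Y$ is a \emph{complete} uniform space. So first I would fix $A\in\lambda$ and $f\in C(X,Y)$, and aim to show $f(A)$ is compact. In a complete uniform space a subset is compact if and only if it is closed and totally bounded, and $\overline{f(A)}$ is closed in the complete space $Y$, hence $\overline{f(A)}$ is itself complete; since $f(A)$ is totally bounded, so is its closure, making $\overline{f(A)}$ a complete totally bounded uniform space, i.e. compact. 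But Theorem 5.1 gives us more: $f(A)$ is already closed, so $f(A)=\overline{f(A)}$ is compact. As $f\in C(X,Y)$ was arbitrary, $A$ is $Y$-compact.

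First I would recall the standard fact (e.g. Theorem 8.3.16 in \cite{eng}, or the classical characterization) that a uniform space is compact if and only if it is complete and totally bounded, and that a closed subspace of a complete uniform space is complete in the induced uniformity. Then the argument is essentially a two-line deduction: let $A\in\lambda$ and $g\in C(X,Y)$; by Theorem 5.1 the set $g(A)$ is closed and totally bounded in $Y$; being closed in the complete space $Y$, $g(A)$ with the induced uniformity is complete; being totally bounded and complete, $g(A)$ is compact. Since this holds for every $g\in C(X,Y)$, the set $A$ is a $Y$-compact subset of $X$, and so every member of $\lambda$ is $Y$-compact.

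I do not expect a serious obstacle here: the theorem is a clean corollary of Theorem 5.1 together with completeness of $Y$. The only point that needs a word of care is making sure the characterization ``complete $+$ totally bounded $=$ compact'' is being applied to $g(A)$ \emph{with its subspace uniformity}, and that total boundedness and closedness of $g(A)$ as given by Theorem 5.1 are exactly the hypotheses needed; this is routine. One might also remark, for emphasis, that completeness of $Y$ cannot be dropped (otherwise $f(A)$ could be closed and totally bounded without being compact, e.g. a totally bounded non-complete metric space), which motivates the hypothesis but does not enter the proof.

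\begin{proof}
Let $A\in\lambda$ and let $g\in C(X,Y)$ be arbitrary. By Theorem 5.1 the set $g(A)$ is a $Y$-closed totally bounded subset of $X$; in particular $g(A)$ is closed in $Y$ and totally bounded with respect to $\mathcal{U}$. Since $Y$ is complete and $g(A)$ is closed in $Y$, the subspace $g(A)$ is complete in the induced uniformity. A uniform space which is both complete and totally bounded is compact, hence $g(A)$ is compact. As $g\in C(X,Y)$ was arbitrary, $A$ is a $Y$-compact subset of $X$. Therefore the family $\lambda$ consists of $Y$-compact sets.
\end{proof}
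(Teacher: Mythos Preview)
Your proof is correct and follows exactly the paper's approach: invoke Theorem~5.1 to get that each $A\in\lambda$ is $Y$-closed totally bounded, then use completeness of $Y$ and the standard fact that closed totally bounded subsets of a complete uniform space are compact. One small wording slip: in the first line of your proof you write ``$g(A)$ is a $Y$-closed totally bounded subset of $X$,'' but $g(A)\subseteq Y$; what Theorem~5.1 gives is that \emph{$A$} is $Y$-closed totally bounded, hence $g(A)$ is closed and totally bounded in $Y$ --- which is exactly what you use next.
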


\begin{proof}
It suffices to note that a closed totally bounded subset of
complete uniform space is a compact set.

\end{proof}

\begin{corollary} Let $X$ be a Hausdorff space, $Y$ be zero-dimensional homogeneous complete uniform space
 and $C_{\mathcal{\hat{U}}|\lambda}(X,Y)=C_{\lambda}(X,Y)$.
 Then, the family $\lambda$
consists of\, $Y$-compact sets.
\end{corollary}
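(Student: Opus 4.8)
The plan is to reduce the statement directly to Theorem 5.3. That theorem establishes exactly this conclusion under the hypothesis that $Y$ is a \emph{closed-homogeneous} complete uniform space, so all that is needed is to observe that a zero-dimensional homogeneous space is automatically closed-homogeneous; this is precisely the assertion recalled at the beginning of Section 4. Granting it, under the hypotheses of the corollary $Y$ is a closed-homogeneous complete uniform space for which $C_{\mathcal{\hat{U}}|\lambda}(X,Y)=C_{\lambda}(X,Y)$, and Theorem 5.3 then applies word for word to yield that every member of $\lambda$ is a $Y$-compact subset of $X$.

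For completeness I would spell out why a zero-dimensional homogeneous space $Y$ is closed-homogeneous. Fix points $x,y\in Y$ and a closed set $K\subseteq Y\setminus\{x,y\}$. By zero-dimensionality choose clopen sets $O_x$ and $O_y$ with $x\in O_x$, $y\in O_y$ and $O_x\cap K=O_y\cap K=\emptyset$; then $O=O_x\cup O_y$ is clopen, contains both $x$ and $y$, and is disjoint from $K$. If one knows that the clopen subspace $O$ is itself homogeneous, pick a homeomorphism $h\colon O\to O$ with $h(x)=y$ and define $f\colon Y\to Y$ to equal $h$ on $O$ and the identity on $Y\setminus O$. Since $O$ is clopen, $f$ is a homeomorphism of $Y$; it is the identity on $Y\setminus O\supseteq K$ and satisfies $f(x)=y$, so $Y$ is closed-homogeneous.

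The one genuinely nontrivial ingredient is thus the passage from homogeneity of $Y$ to homogeneity of its clopen subspaces in the zero-dimensional setting — this is the ``well-known'' fact invoked in Section 4, and it is the only place where zero-dimensionality together with homogeneity is really used. Everything else is bookkeeping: completeness of the uniformity enters only through Theorem 5.3, which is what upgrades ``$Y$-closed totally bounded'' to ``$Y$-compact'', while the hypothesis relating the two topologies on $C(X,Y)$ is transferred unchanged. Consequently the corollary follows, and I do not expect any obstacle beyond the cited structural fact about zero-dimensional homogeneous spaces.
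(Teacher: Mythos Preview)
Your reduction to the preceding theorem (it is Theorem~5.2, not 5.3) via the fact recorded in Section~4 that a zero-dimensional homogeneous space is closed-homogeneous is exactly the paper's argument; the corollary is stated there without proof and is meant to be immediate from that observation. Your supplementary sketch of the ``well-known'' fact is extra material not in the paper, and the sublemma you isolate (that every clopen subspace of a zero-dimensional homogeneous space is itself homogeneous) is neither what Section~4 actually asserts nor obviously true in general---but since you correctly flag this as the one nontrivial ingredient and your main reduction stands independently of it, the proposal is sound.
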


\mbox{\bf Example 5.4.} {\it If $Z$ is the Sorgenfrey line and
$C_{\mathcal{\hat{U}}|\lambda}(Z,Z)=C_{\lambda}(Z,Z)$ then the
family $\lambda$ consists of\, compact sets. Since $Z$ is a quadra
space then we get that for any Hausdorff space $X$ the topology on
$C(X,Z)$ induced by the uniformity $\mathcal{\hat{U}}|\lambda$ of
uniform convergence on the saturation compact family $\lambda$
coincides with the $\lambda$-open topology on $C(X,Z)$.}

\medskip

This work was supported by the Division of Mathematical Sciences
of the Russian Academy of Sciences (project no.~09-T-1-1004).

\bibliographystyle{plain}

\end{document}